\newtheorem{thm}{Theorem}[section]
\newtheorem{cor}[thm]{Corollary}
\newtheorem{prop}[thm]{Proposition}
\newtheorem{lem}[thm]{Lemma}
\newtheorem{quest}[thm]{Question}
\newtheorem{claim}[thm]{Claim}
\theoremstyle{definition}
\newtheorem{defn}[thm]{Definition}
\theoremstyle{remark}
\newtheorem{rem}[thm]{Remark}
\let\c@equation\c@thm
\numberwithin{equation}{section}
\date{June 2025}
\begin{document}

\author{ H\'ector N. Salas}

\title {Boundary behavior of analytic functions on certain Banach spaces}

\address{Department of Mathematical Sciences\\University of Puerto Rico, Mayag\"{u}ez, PR 00681-9018} \email{hector.salas@upr.edu}
\thanks{Many thanks to Professor Bulancea for conversations on the topics}
\subjclass[2020]{Primary 30B30. Secondary:30J99, 30H10, 30H20 } 
\keywords{Banach spaces of analytic functions on the disc. Boundary behavior. Point evaluations and $L^1$-average continuous point evaluations. Hardy weighted spaces. Weighted Dirichlet type spaces.}
\date{}

\begin{abstract}
For Banach spaces of analytic functions on the unit disc in which the polynomials are dense and their point evaluations continuous, we prove the following: If they contain a function such that the limit superior of its modulus is infinity almost everywhere on the unit circle, then the same is true for a residual set of functions.
\end{abstract}
\maketitle

\section{Introduction}

The Banach spaces of analytic functions on the unit disc $\mathbb D$ that we considered satisfy the requirement that the polynomials are dense and the point evaluations (or some variant) are continuous. Our main result is that if those spaces contain a function such that the limit superior of its modulus is infinity almost everywhere on the unit circle $\mathbb T$, then the same is true for a residual set of functions. A key ingredient for the proof is  the Baire Category Theorem.

Examples of this class of spaces are the Hardy weighted spaces $S_{\nu}$ for $\nu<0,$ and the Dirichlet type spaces 
$\mathcal D_{p-1}^p$ for $2<p.$ 

Other authors have considered Baire Category type arguments to obtain properties of some classes of analytic or meromorphic functions. See, for instance, Bagemihl \cite{Ba} and Anderson \cite {An}. The aims and methods of their papers are different from ours.

Composition operators  have been studied by many mathematicians, see, for instance the books by Shapiro \cite {Sh} and by Cowen and MacCluer \cite{Co-Mac}.
Composition operators in several classes of Banach function spaces on $\mathbb D$ have been studied by several mathematicians, for instance Zorboska \cite{Zo}, Gallardo-Guti\'errez and Montes-Rodr\'iguez \cite{Ga-Mo}, Colonna and Mart\'inez-Avenda\~no \cite{Co-Ma}.

The rest of the paper is organized as follows.

In the second section, we recall the definition of three classes of spaces:

(i) The classical $H^p$ spaces.

(ii) The weighted Hardy spaces $S_{\nu}$ for $\nu\in \mathbb R.$ 

(iii) The Dirichlet type spaces $\mathcal D_{p-1}^p$ for $2<p.$ 

We introduce the concept of
 $L_1$-average continuous point evaluations, and prove two lemmas concerning point evaluations which will be used in the proof of the main theorems. 
 
 Bulancea and Salas showed in \cite{Bu-Sa} that if $\nu<0$ then  there is an $f\in S_{\nu}$ such that
\[\limsup |f(r_ne^{i\theta})|=\infty ~\text{for some sequence}~ r_n \to 1 ~\text{ everywhere on} ~ \mathbb T.\]
 Refining that construction, we can prove that the same is true for an $f \in \hat S_0=\cap_{\nu<0}S_{\nu}.$

In the third section, we prove our main results and obtain some of their consequences. 

One of them is that the classical Hardy space $H^2=S_0$ is a first category subset of $\hat S_0=\cap_{\nu<0}S_{\nu}.$ (This is a complete metric space when endowed with a metric coming from a denumerable family of norms.) 

Another consequence 
is that $H^p$ is a first category subset of $\mathcal D_{p-1}^p$ for $2<p.$ This is based 
in the following facts: 

i) The classical result of Littlewood and Paley \cite{Li-Pa} which says that $H^p \subset \mathcal D_{p-1}^p$ for $2<p.$ 

ii) Abkar \cite{Ab} showed that the polynomials are dense in $ D^p_{p-1}$.

iii) Girela and Pela\'ez \cite{Gi-Pe} showed that if $2<p,$ then there exists a function  $f \in D^p_{p-1}$ with
 $\lim_{r \uparrow 1} |f(re^{i\theta})|=\infty$
a.e. on $\mathbb T.$  

The last section consists of a few questions and comments.

\section{ Preliminary results}
Recall that residual sets contain dense $G_{\delta}$ sets. A $G_{\delta}$ set is a denumerable intersection of open sets.

For the unit circle $\mathbb T$ the Lebesgue normalized measure is denoted by $dm(\theta)=\frac{1}{2\pi}d \theta$. If $E\subset \mathbb T$ is Lebesgue measurable, then $|E|$ denotes its measure. 

The classes of analytic functions on $\mathbb D$
such that for $0<p<\infty$ 
\[||f||_p=\lim_{r\to 1} \Big(\int|f(re^{i\theta})|dm(\theta)\Big)^{\frac{1}{p} }<\infty\]
are called Hardy spaces $H^p,$ a standard reference is the book by Duren \cite{Du}. They are separable Banach spaces for $1\leq p$
with norm given by the above formula. The set of bounded analytic functions on $\mathbb D$
is also a Banach space denoted by $H^{\infty}.$ It is not separable. The norm of $f$ is given by 
\[||f||_{\infty}=\sup_{r \to 1}\max \{|f(re^{i\theta})|:e^{i\theta} \in \mathbb T \}.\]

A theorem of Fatou says that the radial limit 
(actually nontangential) 
\[\lim_{r \to 1}f(re^{i\theta})=f(e^{i\theta})\]
exists almost everywhere for $f \in H^{\infty}.$
Therefore the radial limit exists also for the class of quotients of bounded functions, the Nevannlina class $\mathcal {N}$. Moreover,
$H^p\subset \mathcal{N}$ and also
\[
||f||_p=\Big(\int|f(e^{i\theta})|^{p}dm(\theta)\Big)^{1/p}.
\]
Thus $H^p$ can be identified with $\{f \in L^p(\mathbb T,dm(\theta)):\hat f(n)=0 ~\text{ for } n<0\}.$
In particular, the classical Hardy space $H^2$ is a Hilbert space.

For each sequence of positive numbers $\beta=\{\beta_n\}_n$ the weighted Hardy space $H^2(\beta)$ is the Hilbert space of functions analytic on $\mathbb{D}$ for which the norm induced by the inner product
    $$\big{<}\sum_{n=0}^\infty{a_nz^n}, \sum_{n=0}^\infty{b_nz^n}\big{>}=\sum_{n=0}^\infty{a_n\overline{b_n}\beta_n^2}$$
    is finite. Thus for $f(z)=\sum_0^{\infty}a_nz^n$ the norm is given by 
    \[\Big(\sum_0^{\infty}|a_n|^2\beta^2\Big)^{1/2}=||f||_{H^2(\beta)}.\] The monomials form a complete orthogonal system and so they are dense in $H^2(\beta).$ Also, convergence in $H^2(\beta)$ implies uniform convergence on compact subsets of the unit disk. We will focus on  $S_\nu$, the weighted Hardy spaces with weights $\beta_n=(n+1)^\nu$, where $\nu$ is a real number. If $\nu_1>\nu_2$, then $S_{\nu_1}$ is strictly contained in $S_{\nu_2}$; if $\nu>\frac{1}{2}$, then $S_\nu$ is contained in the disk algebra $\mathcal{A}$.
For $\nu=0$ we recover the classical Hardy space; that is, $H^2=S_0.$ We also see that $S_{1/2}$ is the classical Bergman space whereas $S_{-1/2}$ is the classical Dirichlet space. The Hardy weighted spaces are sometimes called weighted Dirichlet spaces. On p. 15 of \cite{Co-Mac}, these spaces are called Hardy weighted spaces if the norm is obtained from Taylor coefficients, Bergman spaces if the norm is obtained from $|f|$ and Dirichlet spaces if the norm is obtained from $|f^{\prime}|.$ The three norms are equivalent but not identical. 

The normalized area measure in $\mathbb D$ is denoted by $dA(z)=\frac{1}{\pi} dxdy=\frac{1}{\pi}rdrd\theta $  where $z=x+iy=re^{i\theta}.$

For $-1<\alpha, ~0<p,$ the weighted Bergman space $\mathcal A^p_{\alpha}$  is the set of analytic functions on $\mathbb D$
contained in 
$L^P(\mathbb D,(1-|z|^2)^{\alpha}dA(z))$

When $1\leq p,\mathcal A^p_{\alpha}$  are Banach spaces with norm
\[||f||_{A^p_{\alpha}}=\Big((\alpha+1)\int _{\mathbb D}(1-|z|^2)^{\alpha}|f(z)|^p ~dA(z)\Big)^{1/p}.\]
$\mathcal A^2_0$ is the classical Bergman space.
A general reference for these spaces is the book by Hedenmalm, Korenblum and Zhu \cite{He-Ko-Zh}
 
 The space $\mathcal D^p_{\alpha}=\{f:f^{\prime}\in \mathcal A^p_{\alpha}\}$
 is said to be Dirichlet type if  $\alpha \leq p+1,$  \cite{Gi-Pe}. (They are also called weighted Dirichlet spaces \cite{Co-Ma}.)
 For $1\leq p$ they are separable Banach spaces when endowed with the norm 
\[||f||_{\mathcal D^p_{\alpha}}=|f(0)|+||f^{\prime}||_{A^p_{\alpha}}.\]
In particular, $\mathcal D^2_0$  is the classical Dirichlet space and $\mathcal D^2_1=H^2.$

As mentioned in the Introduction, we are interested in $\mathcal D^p_{p-1}$ when $2<p,$
in which case 
\[H^p\subset \mathcal D^p_{p-1}\]
according to \cite{Li-Pa} and the polynomials are dense according to \cite{Ab}.

The following lemma will be used in the proof of the first theorem.

\begin{lem} \label{First Lemma}
Let $E$ be a Banach space of continuous (real or complex) functions on a complete metric space $X$ in which point evaluations are continuous. If $K \subset X $  is compact, then there exists a constant $C_K$ for which
\[|f(x)|\leq C_K||f|| ~\text{ for all } x\in K.\]
\end{lem}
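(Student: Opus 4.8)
The plan is to derive this from the uniform boundedness principle (Banach--Steinhaus theorem), applied to the family of point evaluation functionals indexed by the compact set $K$. For each $x \in X$ write $\delta_x \colon E \to \mathbb{F}$ for the point evaluation $\delta_x(f) = f(x)$, where $\mathbb{F}$ denotes the scalar field ($\mathbb{R}$ or $\mathbb{C}$). By the standing hypothesis that point evaluations are continuous, each $\delta_x$ is a bounded linear functional on $E$; that is, $\delta_x \in E^{*}$ with some finite norm $\|\delta_x\|$. The content of the lemma is precisely that these norms stay bounded as $x$ ranges over a compact set, so it suffices to show $\sup_{x\in K}\|\delta_x\| < \infty$.

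The one point that needs checking is the pointwise-boundedness hypothesis required by Banach--Steinhaus: for each fixed $f \in E$, the family of scalars $\{\delta_x(f) : x \in K\}$ should be bounded. Here I would use that every element of $E$ is, by assumption, a continuous function on $X$. Hence for fixed $f \in E$ the real-valued map $x \mapsto |f(x)|$ is continuous on $X$ and therefore attains a finite maximum on the compact set $K$; denote it $M_f := \max_{x \in K} |f(x)| < \infty$. Thus $\sup_{x \in K} |\delta_x(f)| = M_f < \infty$ for every $f \in E$.

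Since $E$ is a Banach space, the uniform boundedness principle now applies to the subset $\{\delta_x : x \in K\} \subset E^{*}$: pointwise boundedness over $E$ forces uniform boundedness of the operator norms, so $C_K := \sup_{x \in K} \|\delta_x\|$ is finite. Then for every $f \in E$ and every $x \in K$, $|f(x)| = |\delta_x(f)| \le \|\delta_x\|\,\|f\| \le C_K \|f\|$, which is the claimed estimate.

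I do not expect a real obstacle in this argument; the only subtlety worth flagging is the origin of the pointwise-boundedness hypothesis, which comes not from any completeness of $X$ but from the continuity of the individual functions in $E$ together with the compactness of $K$. Completeness of $X$ is not used in this lemma, although it is invoked elsewhere in the paper, and completeness of $E$ is what makes Banach--Steinhaus available.
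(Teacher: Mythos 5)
Your proof is correct and follows essentially the same route as the paper: both arguments apply the uniform boundedness principle to the family of point evaluation functionals over $K$, obtaining the pointwise boundedness hypothesis from the continuity of each $f\in E$ together with the compactness of $K$. The only cosmetic difference is that the paper invokes the dichotomy (resonance) form of Banach--Steinhaus and rules out the unbounded alternative, while you use the standard formulation directly.
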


\begin{proof}
Let $L_x$ be
the bounded linear functional $L_x(f)=f(x).$  The Uniform Bounded Principle says that either
$\sup \{||L_x||:x \in K\} <\infty$ or $\sup\{L_x(f): x\in K\}=\infty$ for all $f\in E$
belonging to some dense $G_{\delta}$ in $E.$ But for each $f$ the $\sup\{|f(x)| :x \in K\}$ is finite because $f$ is continuous and $K$ is compact.
\end{proof}

\begin{defn} Let $E$ be a Banach space of analytic function on $\mathbb D.$
The point evaluations are $L^1$-average continuous if
\[\int_0^{2\pi} |f(re^{i\theta}|~dm(\theta )\leq C(r) ||f||\]
for all $f \in E$ and $C(r) \in \mathbb R^+$ for all $0\leq r_0 \leq r<1.$
\end{defn}

The following proposition will be used in the proof of the second theorem.

\begin{prop}
     The point evaluations are $L^1$-average continuous in $\mathcal D_{p-1}^p$ whenever $2<p.$
\end{prop}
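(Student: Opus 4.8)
The plan is to estimate the integral means $\int_0^{2\pi}|f(re^{i\theta})|\,dm(\theta)$ directly in terms of the Taylor coefficients of $f$, and then bound those coefficients by $\|f\|_{\mathcal D^p_{p-1}}$. Write $f(z)=\sum_{n\ge 0}a_nz^n$. Then $\int_0^{2\pi}|f(re^{i\theta})|\,dm(\theta)\le \big(\int_0^{2\pi}|f(re^{i\theta})|^2\,dm(\theta)\big)^{1/2}=\big(\sum_{n\ge 0}|a_n|^2r^{2n}\big)^{1/2}$ by Cauchy--Schwarz and Parseval, so it suffices to control $\sum_n|a_n|^2r^{2n}$, and for a fixed $r<1$ it even suffices to control each $|a_n|$ polynomially in $n$ (since $\sum_n n^k r^{2n}<\infty$ for every $k$). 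So the whole proposition reduces to: there is a constant $c$ and an exponent $k$ (independent of $f$, depending only on $p$) with $|a_n|\le c\,(n+1)^k\|f\|_{\mathcal D^p_{p-1}}$ for all $n$.

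To get the coefficient bound, first recall $f'(z)=\sum_{n\ge 1}na_nz^{n-1}$ and $\|f\|_{\mathcal D^p_{p-1}}=|f(0)|+\|f'\|_{\mathcal A^p_{p-1}}$, so $|a_0|=|f(0)|\le\|f\|_{\mathcal D^p_{p-1}}$ trivially, and for $n\ge 1$ it is enough to estimate $|na_n|$, i.e. the $(n-1)$-st Taylor coefficient of $g:=f'$, by a power of $n$ times $\|g\|_{\mathcal A^p_{p-1}}$. For the Bergman-type space this is standard: for $g\in\mathcal A^p_{p-1}$ with $g=\sum_{m\ge0}b_mz^m$ one has, by integrating in polar coordinates and using the monotonicity of the integral means $M_p(s,g)=\big(\int_0^{2\pi}|g(se^{i\theta})|^p\,dm\big)^{1/p}$, that $M_p(s,g)$ is bounded above by a constant multiple of $(1-s)^{-(p+1-1)/p}\|g\|_{\mathcal A^p_{p-1}}$ wait — more precisely one uses the pointwise growth estimate $|g(z)|\le C_p(1-|z|)^{-(2+(p-1))/p}\|g\|_{\mathcal A^p_{p-1}}=C_p(1-|z|)^{-(p+1)/p}\|g\|_{\mathcal A^p_{p-1}}$, the standard point-evaluation bound for weighted Bergman spaces (see \cite{He-Ko-Zh}). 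Combining this with the Cauchy estimate $|b_m|\le s^{-m}M_\infty(s,g)$ and optimizing over $s$ (take $s=1-1/m$) yields $|b_m|\le C'_p\,(m+1)^{(p+1)/p}\|g\|_{\mathcal A^p_{p-1}}$. Hence $|na_n|\le C'_p\,n^{(p+1)/p}\|f\|_{\mathcal D^p_{p-1}}$, so $|a_n|\le C'_p\,(n+1)^{1/p}\|f\|_{\mathcal D^p_{p-1}}$, which is the desired polynomial coefficient bound with $k=1/p$.

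Putting the two pieces together: $\sum_{n\ge0}|a_n|^2r^{2n}\le (C'_p)^2\|f\|_{\mathcal D^p_{p-1}}^2\sum_{n\ge0}(n+1)^{2/p}r^{2n}=:C(r)^2\|f\|_{\mathcal D^p_{p-1}}^2$, where $C(r)^2=(C'_p)^2\sum_{n\ge0}(n+1)^{2/p}r^{2n}<\infty$ for each $0\le r<1$ and is finite (even bounded) on $[0,r_0]$ for any $r_0<1$, so the definition of $L^1$-average continuity is met with this $C(r)$. Taking square roots and combining with the Cauchy--Schwarz step gives $\int_0^{2\pi}|f(re^{i\theta})|\,dm(\theta)\le C(r)\|f\|_{\mathcal D^p_{p-1}}$ for all $f\in\mathcal D^p_{p-1}$, as required.

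The routine-but-essential obstacle is the weighted Bergman point-evaluation estimate $|g(z)|\le C_p(1-|z|)^{-(p+1)/p}\|g\|_{\mathcal A^p_{p-1}}$ with the correct exponent; once that is in hand everything else is Cauchy--Schwarz, Parseval, and a convergent power series. One could alternatively bypass the pointwise estimate and bound the coefficients $b_m$ of $g\in\mathcal A^p_{p-1}$ directly by expanding $\|g\|_{\mathcal A^p_{p-1}}^p$ in polar coordinates, but going through the known pointwise bound from \cite{He-Ko-Zh} is cleaner and avoids re-deriving that inequality. A subtle point worth a sentence in the write-up is that the inclusion $H^p\subset\mathcal D^p_{p-1}$ is \emph{not} what drives this argument — the estimate is intrinsic to $\mathcal D^p_{p-1}$ — and that the constant $C(r)$ blows up as $r\uparrow1$, which is consistent with, and indeed the whole point of, the boundary pathology proved later.
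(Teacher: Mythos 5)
Your proposal is correct, but it takes a genuinely different route from the paper. You reduce the $L^1$ integral mean to the $L^2$ mean by Cauchy--Schwarz, pass to Taylor coefficients via Parseval, and then bound the coefficients of $f'$ by combining the standard pointwise growth estimate $|g(z)|\le C_p(1-|z|^2)^{-(p+1)/p}\|g\|_{\mathcal A^p_{p-1}}$ with Cauchy's estimate on the circle $|z|=1-1/m$; the exponent bookkeeping ($(2+\alpha)/p$ with $\alpha=p-1$, then dividing by $n$ to recover $|a_n|\lesssim (n+1)^{1/p}\|f\|$) is right, and the resulting $C(r)^2=(C_p')^2\sum(n+1)^{2/p}r^{2n}$ is indeed finite for each $r<1$, which is all the definition requires. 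The paper instead works directly with $f(z)=f(0)+\int_0^z f'(\zeta)\,d\zeta$, bounds $\int_0^{2\pi}|f(re^{i\theta})|\,d\theta$ by $2\pi|f(0)|+\int_0^{2\pi}\int_0^r|f'(se^{i\theta})|\,ds\,d\theta$, and applies H\"older's inequality (splitting $|f'|=s^{-1/p}\cdot|f'|s^{1/p}$, using $q/p<1$ to make the conjugate factor integrable) together with the elementary inequality $1\le\big((1-s^2)/(1-r^2)\big)^{p-1}$ to insert the Bergman weight; this is entirely self-contained and produces an explicit $C(r)\sim(1-r^2)^{-1/q}$. Your argument buys nothing extra here and leans on a cited (though standard) weighted-Bergman evaluation bound from \cite{He-Ko-Zh}, whereas the paper's H\"older argument is shorter and elementary; on the other hand your coefficient route makes the polynomial growth of the Taylor coefficients explicit, which is mildly informative. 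Two small points to tidy in a final write-up: handle the edge cases $m=0,1$ in the optimization $s=1-1/m$ separately (trivial, via the pointwise bound at a fixed radius), and delete the self-correcting ``wait'' sentence about $M_p(s,g)$, since you abandon that line anyway.
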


\begin{proof} Since $f(z)=f(0)+\int_M f^{\prime}(\zeta)~d\zeta$ where $M$ is the segment from $0$ to $z$ we have that
  \begin{equation} \label{eq: first bound}  \int_0^{2\pi}|f(re^{i\theta})|~d\theta\leq
    2\pi|f(0)|+\int_0^{2\pi}\int_0^r|f^{\prime}(se^{i\theta})|~ds~d\theta\end{equation}
Let $q$ be the conjugate of $p;$ i.e, $\frac{1}{q}+\frac{1}{p}=1.$ Thus $1<q<2<p.$
We now use Holder's inequality for the integrand $s^{-1/p}|f^{\prime}  (se^{i\theta})|s^{1/p}$  
\begin{equation} \label{eq: Holder}
    \int_0^{2\pi}\int_0^r|f^{\prime}(se^{i\theta})|~ds~d\theta\leq L(r)\Big(\int_0^{2\pi}\int_0^r|f^{\prime}(se^{i\theta})|^ps~ds~d\theta\Big )^{1/p}\end{equation}
with $L(r)=\Big(\int_0^{2\pi}\int_0^rs^{-q/p}~ds~d\theta\Big )^{1/q}$
which is finite since $0<\frac{q}{p}<1.$
Since $1 \leq \frac{1-s^2}{1-r^2}$ for $0\leq s\leq r<1$ with $z=se^{i\theta}$ we have that $1\leq \Big(\frac{1-|z|^2}{1-r^2}\Big)^{p-1}.$ 

Below we use that $\frac{p-1}{p}=\frac{1}{q}$ and $\pi dA(z)=sdsd\theta.$ 
\[\Big(\int_0^{2\pi}\int_0^r|f^{\prime}(se^{i\theta})|^ps~ds~d\theta\Big )^{1/p}\leq\]

\[\Big(\frac{1}{1-r^2}\Big)^{1/q}\Big (\frac{\pi}{p} \Big )^{1/p}\Big(p\int_{\{|z|\leq r\}}(1-|z|^2)^{p-1}|f^{\prime}(z)|^pdA(z)\Big )^{1/p}\]

Thus by integrating in the whole disc $\mathbb D $ and inequality \eqref{eq: Holder}
\begin{equation}\label{eq: third bound}\int_0^{2\pi}\int_0^r|f^{\prime}(se^{i\theta})|~ds~d\theta\leq L(r)\Big(\frac{1}{1-r^2}\Big)^{1/q}\Big (\frac{\pi}{p} \Big )^{1/p}||f||_{\mathcal D^p_{p-1}}\end{equation}

Set 
\[C(r)=2\pi +L(r)\Big(\frac{1}{1-r^2}\Big)^{1/q}\Big (\frac{\pi}{p} \Big )^{1/p}.\]

Then, by using inequalities 
\eqref {eq: first bound} and \eqref {eq: third bound} we have that
\[\int_0^{2\pi}|f(re^{i\theta})|~d\theta \leq C(r) ~||f||_{\mathcal D^p_{p-1}}\]
which is what we wanted to prove.
\end{proof}

The following proposition shows that 
$H^2=S_0 $ is properly contained in $\hat S_0=\cap_{\nu<0}S_{\nu}.$ It will be used in Corollary 3.9. 

\begin{prop} There exists an analytic function $f$ on $\mathbb D$ which belongs to all $S_{-\nu}$ with $\nu>0$ and a sequence of positive radii $r_k\uparrow 1$ such that 
\[
\lim_{k\to \infty}\text{min}\{|f(z)|:|z|=r_k\}=\infty.
\]
\end{prop}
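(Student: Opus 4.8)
The plan is to take $f$ to be a gap series $f(z)=\sum_{k=1}^{\infty}a_k z^{n_k}$ with positive coefficients $a_k$ and a very rapidly increasing sequence of exponents $n_k$, and to detect its growth on the circles $|z|=r_k$ with $r_k=1-1/n_k$. The point is that on the circle $|z|=r_k$ the single monomial $a_k z^{n_k}$ has constant modulus $a_k r_k^{n_k}\ge a_k(1-1/n_k)^{n_k}\ge a_k/4$ (using that $(1-1/n)^n$ increases to $1/e$ and equals $1/4$ at $n=2$), so by the triangle inequality, uniformly over the circle,
\[
\min_{|z|=r_k}|f(z)|\ \ge\ \frac{a_k}{4}-\sum_{j<k}a_j r_k^{n_j}-\sum_{j>k}a_j r_k^{n_j}\ \ge\ \frac{a_k}{4}-\sum_{j<k}a_j-\sum_{j>k}a_j\,e^{-n_j/n_k},
\]
where we used $r_k^{n_j}\le 1$ for $j<k$ and $r_k^{n_j}=(1-1/n_k)^{n_j}\le e^{-n_j/n_k}$ for $j>k$ (since $\log(1-1/n_k)\le -1/n_k$).

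It then remains to choose the parameters so that the right-hand side tends to $\infty$. First choose the $a_k$ recursively so that $a_k\ge 4\bigl(k+\sum_{j<k}a_j\bigr)$, for instance $a_1=4$ and $a_k=4\bigl(k+\sum_{j<k}a_j\bigr)$; this forces $a_k$ to grow at most geometrically, say $a_k\le C^k$ for a suitable constant $C$, and it guarantees $\frac{a_k}{4}-\sum_{j<k}a_j\ge k$. Next choose $n_k$ increasing so fast that two things happen: (a) $n_{k+1}/n_k\to\infty$, which makes $\sum_{j>k}a_j e^{-n_j/n_k}\to 0$; and (b) $\sum_k a_k^2\,(n_k+1)^{-2\nu}<\infty$ for every $\nu>0$. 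A concrete choice such as $n_k=2^{k^2}$ does both: for (a), $n_j/n_k=2^{\,j^2-k^2}\ge 2^{2k+1}$ whenever $j>k$, which crushes the merely geometric growth of $a_j$; for (b), $a_k^2\le C^{2k}$ grows only geometrically while $(n_k+1)^{-2\nu}$ decays like $2^{-2\nu k^2}$, so the series converges for every $\nu>0$. With these choices, the coefficient condition shows $f\in S_{-\nu}$ for all $\nu>0$, i.e. $f\in\hat S_0$, and the displayed inequality gives $\min_{|z|=r_k}|f(z)|\ge k-o(1)\to\infty$.

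The only delicate point is balancing the three competing requirements on the growth rates: $a_k$ must outgrow the partial sums $\sum_{j<k}a_j$ (so the $k$-th monomial dominates on the $k$-th circle), the membership condition $f\in\bigcap_{\nu>0}S_{-\nu}$ constrains $a_k$ against $n_k$, and the tail estimate needs the gaps $n_{k+1}/n_k$ to blow up. Since the exponents $n_k$ may be taken to grow arbitrarily fast while the $a_k$ are only forced to grow geometrically, there is ample room, and the explicit recursion for $a_k$ together with $n_k=2^{k^2}$ makes all three hold simultaneously. Analyticity of $f$ on $\mathbb D$ is automatic, either because $f\in S_{-\nu}$ and convergence in $H^2(\beta)$ forces local uniform convergence, or directly from the rapid decay of $a_k|z|^{n_k}$ on compact subsets of $\mathbb D$. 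Note that no Baire category argument is needed here; this proposition is a direct construction, to be fed into the category argument of the main theorems.
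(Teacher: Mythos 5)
Your proof is correct and is essentially the same construction as the paper's: a gap series $\sum a_k z^{n_k}$ whose $k$-th monomial dominates on the $k$-th circle, with the coefficients growing only geometrically against super-fast exponents so that $f\in S_{-\nu}$ for every $\nu>0$. The only difference is cosmetic: you make the parameters explicit ($r_k=1-1/n_k$, $n_k=2^{k^2}$, and the quantitative tail bound $r_k^{n_j}\le e^{-n_j/n_k}$), whereas the paper interleaves recursive ``choose $n_k$ large enough, then $r_k$ close enough to $1$'' selections to enforce the same dominance inequalities.
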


\begin{proof} We will construct a function $f$ such that $f\in S_{-\nu_k}$ where $\nu_k \downarrow 0$ when $k\to \infty.$

Let $f(z)=\sum_{k=1}^{\infty}c_kz^{n_k}$ satisfy that the sequence $\{n_k\}_k$ is going to 
$\infty$ very fast and also that the sequence of radii $\{r_k\}$ is increasing to 1 very fast. 

Let $\{c_k\}_k$ be a sequence of positive numbers such that
\begin{equation} \label{1}
    c_1>1 ~\text{ and } ~ c_k-\sum_{j=1}^{k-1}c_j>k.
    \end{equation} 

    The idea of the construction is that for 
    $|z|=r_k$ the dominant term in $|f(z)|$
is $c_kr_k^{n_k}.$

 In the k-step is first chosen $n_k$ and then is chosen $r_k.$ The process is as follows:
$n_1$ is chosen first and then $r_1$ such that 
\[\frac{c_1}{(n_1+1)^{\nu_1}}<\frac{1}{\sqrt{2}}
~\text {  and  }~c_1r_1^{n_1}>1.\]

Assume that $n_1,\cdots, n_k$ and $r_1,\cdots ,r_k$ have been  chosen such that 
\begin{equation} \label {2}
    \text{ for }~ 1\leq j\leq k,~~\frac{c_j}{(n_j+1)^{\nu_j}}<\frac{1}{\sqrt{2^j}}.
\end{equation}

\begin{equation} \label{3}
c_1r_1^{n_1}-\sum_{j=2}^kc_jr_1^{n_j}>1
\end{equation}

\begin{equation} \label{4}
     1<p<k \Longrightarrow c_pr_p^{n_p}-\sum_{1\leq j <p}c_j -\sum_{p<j\leq k} c_jr_p^{n_j}>p
     \end{equation}

\begin{equation} \label{5}
    c_kr_k^{n_k}-\sum_{j=1}^{k}c_j>k.
    \end{equation}

Now we find  $n_{k+1}$ large enough such that
\[\frac{c_{k+1}}{(n_{k+1}+1)^{\nu_{k+1}}}<\frac{1}{\sqrt{2^{k+1}}}~\text{ and }~c_1r_1^{n_1}-\sum_{j=2}^{k+1}c_jr_1^{n_j}>1\]
\[ 1<p<k+1 \Longrightarrow c_p-\sum_{1\leq j <p}c_j -\sum_{p<j\leq k+1} c_jr_p^{n_j}>p\]

We can now choose $r_{k+1}$ with $r_k<r_{k+1}<1$ but sufficiently near to 1 such that
\[c_{k+1}r_{k+1}^{n_{k+1}}-\sum_{j=1}^{k}c_j>k+1.
\] 
This shows that \eqref{1},  \eqref{2}, \eqref{3}, \eqref{4} and \eqref{5} are valid for all $k\in \mathbb N.$
If $|z|=r_p,$ then using \eqref{4} or \eqref{3}
if $p=1$ and allowing any $p$
\[p\leq c_p|r_p|^{n_p}-\sum_{1\leq j <p}c_j -\sum_{p<j} c_jr_p^{n_j}\leq |f(z)|\]
Thus
\[\min\{|f(z)|:|z|=r_p\}\geq p.\]

To conclude we need to show that if $\nu>0$ then
$f\in S_{-\nu}.$ 

Let $\nu>\nu_k>0.$ Therefore using inequality \eqref{2}
\[\sum_{j=1}^{\infty}\Big(\frac{c_j}{(n_j+1)^{\nu}}\Big)^2\leq \sum_{j=1}^{k-1}\Big(\frac{c_j}{(n_j+1)^{\nu}}\Big)^2+\sum_{j=k}^{\infty}\Big(\frac{c_j}{(n_j+1)^{\nu_j}}\Big)^2\leq\]
\[\sum_{j=1}^{k-1}\Big(\frac{c_j}{(n_j+1)^{\nu}}\Big)^2+\sum_{j=k}^{\infty}\frac{1}{2^j}<\infty.\]
This completes the proof of the proposition. \end{proof}

\section{Two flavors of the main theorem}

\begin{thm} Let $E$ be a Banach space of analytic functions on the unit disc $\mathbb D$ in which the  polynomials are dense and the point evaluations are continuous. Assume that there exists $f\in E$ such that   
\[|\{e^{i\theta}:\limsup_{r \to 1}|f(re^{i\theta}|=\infty\}|=1\]

Then

\[\{g\in 
E: |\{e^{i\theta}:\limsup_{r \to 1}|g(re^{i\theta}|=\infty\}|=1 \}\]
 is a residual set in $E.$
\end{thm}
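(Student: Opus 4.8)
The plan is to realize the target set as a countable intersection of open dense sets and invoke the Baire Category Theorem (recall $E$, being a Banach space, is a complete metric space). First I would reformulate the condition. For a radius $\rho\in(0,1)$, a natural number $N$, and a measurable set $A\subset\mathbb T$, observe that $\{g\in E:\limsup_{r\to1}|g(re^{i\theta})|=\infty\}$ has full measure if and only if for every $N$ the set of $e^{i\theta}$ at which $\sup_{\rho\le r<1}|g(re^{i\theta})|>N$ has measure $1$ for every $\rho$; equivalently, writing $m_N(g)$ for the measure of $\{e^{i\theta}:\sup_{r<1}|g(re^{i\theta})|>N\}$, we want $m_N(g)=1$ for all $N$. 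The key point is to enlarge this slightly: for $N\in\mathbb N$ and $k\in\mathbb N$ set
\[
U_{N,k}=\Big\{g\in E:\ \big|\{e^{i\theta}:\ \textstyle\sup_{0\le r<1}|g(re^{i\theta})|>N\}\big|>1-\tfrac1k\Big\}.
\]
Then the target set contains $\bigcap_{N,k}U_{N,k}$, and conversely each $g$ in that intersection has $m_N(g)=1$ for all $N$, hence $\limsup|g(re^{i\theta})|=\infty$ a.e. So it suffices to show each $U_{N,k}$ is open and dense.

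For \emph{openness}: if $g\in U_{N,k}$, there is a finite radius $\rho<1$ and a compact arc-union $K\subset\mathbb D$ of the circles $\{|z|=r\}$, $r$ in a finite set approaching $\rho$-to-$1$ range, witnessing the inequality on a set of measure $>1-1/k$. Using Lemma~\ref{First Lemma} with $X=\overline{\mathbb D}$ (or a suitable complete metric subspace on which point evaluations are continuous — here I would use that convergence in $E$ forces uniform convergence on compact subsets of $\mathbb D$, which follows from continuity of point evaluations and the closed graph / uniform boundedness machinery) we get, on each such compact circle, a uniform bound $|h(z)|\le C_K\|h\|$; hence a small norm-perturbation of $g$ changes $\sup_{r}|g(re^{i\theta})|$ by less than $1$ uniformly on that witnessing set of $\theta$, keeping us inside $U_{N,k}$. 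The slight subtlety is that the supremum over $r\in[0,1)$ is not over a compact set; I would handle this by first passing to a \emph{finite} subfamily of radii on which the strict inequality $>N$ already holds (increasing $N$ to $N+1$ in the definition buys the needed slack), then perturb.

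For \emph{density}: here is where the hypothesis is used. Fix the given $f\in E$ with $|\{e^{i\theta}:\limsup_r|f(re^{i\theta})|=\infty\}|=1$. Given any $g_0\in E$ and $\varepsilon>0$, I want $g_0+tf_j\in U_{N,k}$ for suitable $t>0$, where $f_j$ is a polynomial approximant of $f$ — but polynomials are bounded, so scaling $f$ itself, not a polynomial, is what produces large radial oscillation. The clean route: since polynomials are dense, it is enough to prove density of $U_{N,k}$ relative to the dense set of polynomials, and for a polynomial $q$ consider $q+tf$ with $t$ small in norm; since $\limsup_r|f(re^{i\theta})|=\infty$ a.e. while $q$ is bounded, for a.e. $\theta$ we have $\sup_r|q(re^{i\theta})+tf(re^{i\theta})|=\infty>N$, so $q+tf\in\bigcap_N\bigcap_k U_{N,k}$ outright, and $\|(q+tf)-q\|=t\|f\|<\varepsilon$ for $t$ small. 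Thus $U_{N,k}$ is dense. Then Baire gives that $\bigcap_{N,k}U_{N,k}$ is a dense $G_\delta$, hence residual, and it is contained in the target set, which is therefore residual.

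The main obstacle I anticipate is the openness argument: making precise that a small change in the $E$-norm produces a small uniform change in $|g(re^{i\theta})|$ \emph{simultaneously for all} $r$ in the relevant range and for the witnessing $\theta$-set, despite $[0,1)$ being non-compact. The fix is the standard one — replace the open condition $\limsup=\infty$ by the genuinely open condition ``exceeds $N$ somewhere on a \emph{fixed finite} list of circles'' and verify this finite list can always be extracted from membership in $U_{N,k}$ after inflating $N$ by one; measurability of the relevant $\theta$-sets (sup of finitely many continuous functions) is then automatic.
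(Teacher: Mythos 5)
Your proposal is correct, and it reorganizes the argument in a way that differs from the paper's while relying on the same two engines: the Baire Category Theorem and the uniform bound $|h(z)|\le C_K\|h\|$ on compact subsets (Lemma~\ref{First Lemma}). The paper takes as its open dense sets the unions $W_k=\bigcup_{n,\,j\ge k}B(n,j)$ of explicit balls centered at $P_n+\frac1j f$, so openness and density are free, and all the work goes into showing that membership in a ball forces $|g|\ge j$ on a set of $\theta$'s of measure $\ge 1-1/j^2$ at radii $\le r_M$ (whence the intersection $W_\infty$ lands in the target set). You instead take the ``natural'' sets $U_{N,k}$ defined by a measure inequality, for which containment of $\bigcap_{N,k}U_{N,k}$ in the target set is essentially immediate (using that $g$ is bounded on each compact subdisc, so an infinite supremum over $r<1$ forces the $\limsup$ as $r\to1$ to be infinite), and you must instead prove openness and density separately. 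Your openness fix is sound: since $\{\theta:\sup_r|g(re^{i\theta})|>N\}$ is an increasing countable union over finite sets of (rational) radii, and then over slacks $N+\tfrac1m$, continuity of measure from below extracts a finite family of circles and an $\varepsilon>0$ still witnessing measure $>1-\tfrac1k$, after which Lemma~\ref{First Lemma} on that compact union of circles controls the perturbation; even the cruder ``$N+1$'' version suffices, since $U_{N+1,k}\subset\operatorname{int}U_{N,k}$ already yields a dense $G_\delta$ with the same intersection. Your density step (perturbing a polynomial $q$ to $q+tf$, which lies in every $U_{N,k}$ because $q$ is bounded and $\limsup_r|f|=\infty$ a.e.) is exactly the paper's perturbation idea in a cleaner guise. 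The trade-off: the paper's route avoids any openness verification but needs the $A(n,k)$ bookkeeping at the end; yours front-loads a genuine (but standard) openness argument and gets a more transparent final intersection. Both are complete proofs.
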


\begin{proof}
    For each natural number $M$ we can find a radius 
    $r_M <1$ such that
\begin{equation} \label{measure of F_M}
F_M=\{e^{i\theta}:\exists r \leq r_M \text{ and  } |f(re^{i\theta})|\geq M\} \text { and  } ~|F_M|\geq 1-\frac{1}{M}.
\end{equation} 
Each $F_M$ is a closed set and therefore measurable. 

Let $P_n$ be a sequence of polynomials  dense in $E.$ For each $n,k \in \mathbb N$  let $B_{n,k}$ be a ball  centered at $P_n+\frac{1}{k}f$  with radius $\epsilon(n,k)$ to be determined momentarily.
For each $n,k$ we can find  $M=M(n,k)$
 so large that
 \begin{equation}\label {relation between M and k}
     \frac{M}{k}>k+||P_n||_{\infty}
+1 .\end{equation}
According to Lemma \ref{First Lemma} there exists $C_M$ such that for all $z$ with $|z|\leq r_M$ hold that
\begin{equation}\label{point evaluations}
    |h(z)|\leq C_M||h||  \text{   for all   } h\in E.
\end{equation}
We now choose $\epsilon(n,k)=min\{\frac{1}{k},\frac{1}{C_M}\}.$

\begin{claim}  If $z=re^{i\theta},~ r\leq r_M ,~|f(z)|\geq M$ and
 $g\in B_{n,k},$ then
 $|g(z)>k.$
 \end{claim}

 By inequality \eqref{point evaluations} and our choice of $\epsilon(n,k)$ we have that 
 \[| P_n(z)+\frac{1}{k}f(z)-g(z)|\leq C_M || P_n+\frac{1}{k}f-g||<C_M \epsilon(n,k)\leq 1 \]
 
 Using the fact that $|f(z)|>M,$ the above inequality, triangular inequality and \eqref{relation between M and k} 
 
 \[|g(z)\geq \frac{1}{k}|f(z)|-|P_n(z)|-| P_n(z)+\frac{1}{k}f(z)-g(z)|\geq\]
 \[\frac{M}{k}-||P_n||_{\infty}-1> k.\]
 The claim is now proved. 

 As a consequence we have that for any $g \in B(n,k)$

 \[
     F_M\subset A(n,k)=\{e^{i\theta}:\exists r\leq r_M \text{   and  } |g(re^{i\theta})|\geq k\}
 \]
and therefore using \eqref{measure of F_M} and \eqref{relation between M and k}
\begin{equation} \label{measure of A}
  |A(n,k)|\geq 1-\frac{1}{M}  \geq 1-\frac{1}{k^2}
\end{equation}

We now define the open sets
\[W_k=\cup_{1\leq n,k\leq j}B(n,j)\]
 which are dense since each polynomial $P_n$ is a limit point of $W_k.$ Using Baire's category theorem we have that
 \[W_{\infty}=\cap_{1\leq k}W_k\]
is a residual set in E.

\begin{claim} 
 If $g\in W_{\infty},$  then   $ |\{e^{i\theta}:\limsup_{r \to 1}|g({i\theta}|=\infty\}|=1.$
 \end{claim}

For each $k$ there are $n$ and $j\geq k$ such that  $g\in 
B(n,j).$ Thus we have $n_p,j_p$ where $j_p \to \infty$ and $g\in B(n_p,j_p)$ for all $p.$

Let $\cup _{p=s}A(n_p,j_p)=A_s.$ Then \eqref{measure of A} implies that $|A_s|=1.$ Let $\cap_{1\leq s}A_s=A$ and since $A_{s+1}\subset A_s$ it follows that $|A|=1.$ If $e^{i\theta}\in A_s,$ then there exists $ r$ such that $|g(re^{i\theta})|\geq j_s.$ Thus we have that
\[\limsup_{r \to 1}|g(re^{i\theta})|=\infty,\]
proving the second claim and therefore completing the proof of the theorem.
\end{proof}

\begin{cor}
 Let $\nu<0.$ The set 
 \[\{g\in S_{\nu}:\limsup_{r \to 1} |g(re^{i\theta})|=\infty ~\text{ for almost all }~ e^{i\theta}\}\] 
 is residual in $S_{\nu}$.
\end{cor}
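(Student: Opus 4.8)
The plan is to obtain this corollary as an immediate application of Theorem~3.1 with $E=S_\nu$, so the entire task reduces to checking the three hypotheses of that theorem. First, $S_\nu$ is the weighted Hardy space $H^2(\beta)$ with $\beta_n=(n+1)^\nu$; it is a Hilbert space, hence a Banach space of analytic functions on $\mathbb{D}$, and the monomials form a complete orthogonal system, so the polynomials are dense in it. Both facts are recorded in Section~2, so nothing new is needed there.

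Second, I would confirm that the point evaluations on $S_\nu$ are continuous. For $f(z)=\sum_{n\ge 0}a_nz^n\in S_\nu$ and $|z|=r<1$, Cauchy--Schwarz gives
\[
|f(z)|\le \sum_{n=0}^\infty (|a_n|\beta_n)\,\frac{r^n}{\beta_n}\le \|f\|_{S_\nu}\Big(\sum_{n=0}^\infty \frac{r^{2n}}{\beta_n^2}\Big)^{1/2},
\]
and since $\nu<0$ the weights $\beta_n^{-2}=(n+1)^{-2\nu}$ grow only polynomially, so the last series converges for every $r<1$. Hence $L_z(f)=f(z)$ is a bounded linear functional whose norm is controlled by a function of $r$; equivalently, convergence in $S_\nu$ forces uniform convergence on compact subsets of $\mathbb{D}$, as already noted for $H^2(\beta)$ in Section~2.

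Third, the existence of some $f\in S_\nu$ with $|\{e^{i\theta}:\limsup_{r\to 1}|f(re^{i\theta})|=\infty\}|=1$ is precisely what the construction of Bulancea and Salas in \cite{Bu-Sa}, quoted in the Introduction, provides: for $\nu<0$ they produce $f\in S_\nu$ and radii $r_n\to 1$ with $\limsup_n|f(r_ne^{i\theta})|=\infty$ for \emph{every} $e^{i\theta}\in\mathbb{T}$, and a fortiori $\limsup_{r\to 1}|f(re^{i\theta})|=\infty$ everywhere, hence on a set of full measure. With all three hypotheses verified, Theorem~3.1 applied to $E=S_\nu$ yields that
\[
\{g\in S_\nu:\ |\{e^{i\theta}:\limsup_{r\to 1}|g(re^{i\theta})|=\infty\}|=1\}
\]
is residual in $S_\nu$, which is the assertion. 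I do not expect any genuine obstacle here; the only point requiring a moment's attention is matching the "everywhere along the sequence $r_n$" statement of \cite{Bu-Sa} with the "measure one, full $\limsup$" hypothesis of Theorem~3.1, and that is immediate from the monotonicity of $\limsup$.
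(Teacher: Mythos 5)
Your proposal is correct and follows the same route as the paper: the paper's proof likewise just checks the hypotheses of Theorem~3.1 for $E=S_\nu$ (citing \cite{Co-Mac} for density of polynomials and continuity of point evaluations, and Proposition~5.8 of \cite{Bu-Sa} for the badly behaved function). Your only addition is writing out the Cauchy--Schwarz estimate for the point evaluations explicitly, which is a correct filling-in of a cited fact rather than a different argument.
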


\begin{proof}
In $S_{\nu}$ the polynomials are dense and the point evaluation continuous \cite{Co-Mac}. By Proposition 5.8 of \cite{Bu-Sa} there exists a function $f$ that satisfies the condition of the preceding theorem and therefore the conclusion of such a theorem is also obtained.    
\end{proof}

As we said in the introduction, $\hat S_0$ is a complete metric space. The requirement is that the norms $||~||_{S_{\nu_k}}$ satisfy $\lim_{k\to \infty} \nu_k=0$ and $\nu_k<0$ for all $k.$
\begin{cor} The set
$g \in \hat S_0=\cap_{\nu<0}S_{\nu}$ such that \[|\{e^{i\theta}:\limsup_{r\to 1}
|g(re^{i\theta} )| = \infty\}|=1\]  is
(i) residual in $\hat S_0$,
and therefore
(ii) $H^2 = S_0$ is a first category subset of $\hat S_0.$
\end{cor}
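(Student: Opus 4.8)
The plan is to observe that the whole argument of Theorem~3.1 uses only three features of the ambient space $E$ --- completeness of the metric, density of the polynomials, and continuity of point evaluations --- and that all three survive when $E=\hat S_0$ is given its Fr\'echet topology. Fix a sequence $\nu_k\uparrow 0^-$ and the associated complete translation-invariant metric $d(f,g)=\sum_{k\ge 1}2^{-k}\frac{\|f-g\|_{S_{\nu_k}}}{1+\|f-g\|_{S_{\nu_k}}}$ on $\hat S_0=\bigcap_k S_{\nu_k}$. First I would verify that the polynomials are $d$-dense: for $g=\sum_n a_nz^n\in\hat S_0$ the Taylor partial sums $s_m(g)$ satisfy $\|g-s_m(g)\|_{S_{\nu_k}}^2=\sum_{n>m}|a_n|^2(n+1)^{2\nu_k}\to 0$ for every $k$ (the monomials form a complete orthogonal system in each $S_{\nu_k}$), whence $d(s_m(g),g)\to 0$ since each summand is at most $2^{-k}$ and tends to $0$; restricting further to coefficients in $\mathbb Q+i\mathbb Q$ produces a countable $d$-dense family $\{P_n\}_n$ of polynomials. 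Second, since $\hat S_0\subset S_{\nu_1}$ continuously and $S_{\nu_1}$ is a weighted Hardy space (a reproducing-kernel Hilbert space), for every $r<1$ there is a constant $C_r$ with $|h(z)|\le C_r\|h\|_{S_{\nu_1}}$ for all $h\in\hat S_0$ and all $|z|\le r$; this plays the role of Lemma~\ref{First Lemma}, now realized through the single defining norm $\|\cdot\|_{S_{\nu_1}}$.

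Into this machine I would feed the function $f$ furnished by the preceding Proposition --- the one giving $f\in\bigcap_{\nu>0}S_{-\nu}=\hat S_0$ together with radii $r_k\uparrow 1$ such that $\min\{|f(z)|:|z|=r_k\}\to\infty$: since $|f(r_ke^{i\theta})|\ge\min_{|z|=r_k}|f|\to\infty$ for \emph{every} $e^{i\theta}$, we have $\limsup_{r\to1}|f(re^{i\theta})|=\infty$ on all of $\mathbb T$, so the hypothesis of Theorem~3.1 holds with full measure. Then I would reproduce the proof of Theorem~3.1 essentially line by line. The only adjustment is to the balls $B_{n,k}$ centered at $P_n+\tfrac{1}{k}f$: their radii $\epsilon(n,k)$ are now measured in $d$, chosen small enough that $d(g,P_n+\tfrac{1}{k}f)<\epsilon(n,k)$ forces $\|g-P_n-\tfrac{1}{k}f\|_{S_{\nu_1}}<1/C_M$ --- possible because $d(h,0)\ge\tfrac{1}{2}\,\|h\|_{S_{\nu_1}}/(1+\|h\|_{S_{\nu_1}})$, so small $d$-balls lie inside small $\|\cdot\|_{S_{\nu_1}}$-balls --- where $C_M$ is the point-evaluation constant at radius $r_M$ and $M=M(n,k)$ is picked with $\tfrac{M}{k}>k+\|P_n\|_\infty+1$ exactly as before. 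With that replacement the Claim of Theorem~3.1 (if $|z|\le r_M$, $|f(z)|\ge M$ and $g\in B_{n,k}$ then $|g(z)|>k$) holds unchanged, the sets $W_k=\bigcup_{n\ge1,\,j\ge k}B(n,j)$ are again dense and open (the centers $P_n+\tfrac{1}{j}f\to P_n$ in $d$ while $\{P_n\}$ is $d$-dense), Baire's theorem makes $W_\infty=\bigcap_k W_k$ residual in $\hat S_0$, and the same measure computation as in Theorem~3.1 shows every $g\in W_\infty$ has $\limsup_{r\to1}|g(re^{i\theta})|=\infty$ almost everywhere. Since the set named in the Corollary contains $W_\infty$, this yields (i).

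For (ii), note that $S_0\subset S_\nu$ for every $\nu<0$, so $H^2=S_0\subset\hat S_0$; and by Fatou's theorem each $g\in H^2$ has a finite radial limit a.e., so $\limsup_{r\to1}|g(re^{i\theta})|<\infty$ a.e. and hence $g$ fails the condition of (i). Thus $H^2$ is contained in the complement of a residual set --- a set of first category --- and, a subset of a meager set being meager, $H^2$ is itself of first category in $\hat S_0$.

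I expect the only genuine work to be the transposition of the ``Banach'' bookkeeping of Theorem~3.1 into the Fr\'echet setting: confirming that density of the polynomials holds for the metric $d$ and not merely norm by norm, that Lemma~\ref{First Lemma} can be run through the single norm $\|\cdot\|_{S_{\nu_1}}$, and that the radii $\epsilon(n,k)$ --- which must be measured in $d$ --- can still be taken small enough to control the point-value perturbations. Once these translations are pinned down, the argument is word for word the proof of Theorem~3.1.
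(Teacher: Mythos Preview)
Your proposal is correct and follows essentially the same route as the paper: feed the function of Proposition~2.7 into the scheme of Theorem~3.1, adapted to the Fr\'echet metric on $\hat S_0$, and then invoke Fatou for part~(ii). The only cosmetic difference is that the paper controls the point-evaluation estimate \eqref{point evaluations} at stage $k$ through the $k$-th defining norm $\|\cdot\|_{S_{\nu_k}}$, whereas you route every stage through the single weakest norm $\|\cdot\|_{S_{\nu_1}}$; both choices work, and yours is marginally tidier.
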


\begin{proof} (i) We used the function obtained in Proposition 2.7 and a similar approach that in the preceding theorem. In the definition of the radius of the ball $B(n,k)$ we use $||h||_{S_{\nu_k}}$ in \eqref{point evaluations}.

 (ii) The functions in $H^2$ are convergent almost everywhere in $\mathbb T,$ 
 but the functions that behave badly when approaching the boundary $\mathbb T$ is a residual set. This concludes the proof. 
\end{proof}

\begin{thm}
  Let $E$ be a Banach space of analytic functions on the unit disc $\mathbb D$ such that the polynomials are dense and the point evaluations are $L^1$-average continuous. Let $\phi$ be an increasing positive continuous function on $0\leq r_0 \leq r <1$ with 
  $\lim_{r \to 1}\phi(r)=\infty.$ 
  
  Assume further that there exist $f\in E$ and a sequence $r_p \to 1$ for which    \[\lim_{p\to \infty}\frac{1}{\phi(r_p)}min\{|f(r_pe^{i\theta})|:e^{i\theta} \in \mathbb T\}=\infty.\] 
 Then

\[\{g\in 
E: |\{e^{i\theta}:\limsup_{r \to 1}\frac{1}{\phi(r)}|g(re^{i\theta}|=\infty\}|=1 \}\]
 is a residual set in $E.$
\end{thm}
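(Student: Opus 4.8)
The plan is to run the Baire-category scheme of Theorem 3.1 almost verbatim, with one change: the pointwise bound furnished by Lemma~\ref{First Lemma} is no longer available, so it will be replaced by the $L^1$-average bound from the definition of $L^1$-average continuous point evaluations together with Markov's inequality. Fix a sequence $\{P_n\}$ of polynomials dense in $E$. For each $n,k\in\mathbb N$ I would consider the open ball $B(n,k)$ centered at $P_n+\frac1k f$ of radius $\epsilon(n,k)$, choosing things in a definite order. Given $n,k$, I would \emph{first} pick $p=p(n,k)$ so large that $r_p\ge 1-\frac1k$, that $\phi(r_p)\ge 1$, and that $\frac1k\,m_p\ge (k+1)\phi(r_p)+\|P_n\|_\infty$, where $m_p:=\min\{|f(r_pe^{i\theta})|:e^{i\theta}\in\mathbb T\}$; this is possible because $r_p\to1$, $\phi(r_p)\to\infty$, and, by hypothesis, $m_p/\phi(r_p)\to\infty$. \emph{Only then} would I set $\epsilon(n,k)=\min\{\tfrac1k,\ \phi(r_p)/(k^2 C(r_p))\}$, with $C(\cdot)$ the constant from the $L^1$-average continuity of the point evaluations of $E$.

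The heart of the argument is the analogue of the first Claim in Theorem 3.1: if $g\in B(n,k)$ and $p=p(n,k)$, then $|\{e^{i\theta}:|g(r_pe^{i\theta})|\ge k\,\phi(r_p)\}|\ge 1-\tfrac1{k^2}$. To prove this, put $h=g-P_n-\frac1k f$, so that $\|h\|<\epsilon(n,k)$; then $L^1$-average continuity at the radius $r_p$ gives $\int_0^{2\pi}|h(r_pe^{i\theta})|\,dm(\theta)\le C(r_p)\|h\|<\phi(r_p)/k^2$, and Markov's inequality shows that the set where $|h(r_pe^{i\theta})|\ge \phi(r_p)$ has measure at most $1/k^2$. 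For $e^{i\theta}$ outside that set we get, at $z=r_pe^{i\theta}$, the estimate $|g(z)|\ge \frac1k|f(z)|-|P_n(z)|-|h(z)|\ge \frac1k m_p-\|P_n\|_\infty-\phi(r_p)\ge k\,\phi(r_p)$, the last step by the choice of $p$. This is precisely where the $\min$-hypothesis on $f$ is used with full force — it plays the role of the sets $F_M$ in Theorem 3.1 — and where it matters that $p(n,k)$ was fixed before $\epsilon(n,k)$.

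Finally, set $W_k=\bigcup_{n\ge1,\ j\ge k}B(n,j)$. Each $W_k$ is open, and it is dense because $\epsilon(n,j)\le\frac1j\to0$ and $P_n+\frac1j f\to P_n$ while the $P_n$ are dense in $E$; hence $W_\infty=\bigcap_k W_k$ is a dense $G_\delta$, i.e.\ residual, in the Banach space $E$. If $g\in W_\infty$, then for each $k$ there is a pair with $g\in B(n,j)$, $j\ge k$, so one obtains $(n_l,j_l)$ with $j_l\to\infty$ and $g\in B(n_l,j_l)$; writing $\rho_l=r_{p(n_l,j_l)}$ one has $\rho_l\to1$ and, by the Claim, $D_l:=\{e^{i\theta}:|g(\rho_le^{i\theta})|\ge j_l\,\phi(\rho_l)\}$ with $|D_l|\ge1-j_l^{-2}$. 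Then $A_s:=\bigcup_{l\ge s}D_l$ has full measure, since its complement is contained in $\bigcap_{l\ge s}D_l^{\,c}$ and $|D_l^{\,c}|\le j_l^{-2}\to0$; hence $A:=\bigcap_s A_s$ has full measure, and for every $e^{i\theta}\in A$ there are arbitrarily large $l$ with $\frac1{\phi(\rho_l)}|g(\rho_le^{i\theta})|\ge j_l$, so $\limsup_{r\to1}\frac1{\phi(r)}|g(re^{i\theta})|=\infty$. Thus the set in the statement contains $W_\infty$ and is therefore residual. I do not expect any essential obstacle here beyond getting this bookkeeping right: fixing $p(n,k)$ from the $\min$-hypothesis before choosing $\epsilon(n,k)$ from $L^1$-average continuity, arranging $r_{p(n,k)}\to1$ so that the conclusion is genuinely a boundary statement, and using Markov's inequality in place of the uniform boundedness that was available in Theorem 3.1.
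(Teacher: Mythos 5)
Your proposal is correct and follows essentially the same Baire-category scheme as the paper's proof: balls centered at $P_n+\frac1k f$ whose radii are calibrated by the $L^1$-average continuity constant $C(r_{p(n,k)})$, a measure estimate showing $g$ is large off a set of measure $O(k^{-2})$ at the radius $r_{p(n,k)}$, and the $W_k=\bigcup B(n,j)$ intersection argument. The only difference is cosmetic: you obtain the measure bound via Markov's inequality applied to $h=g-P_n-\frac1k f$ followed by a pointwise triangle inequality, whereas the paper integrates the triangle inequality over the exceptional set $H$ directly; both rest on the same two ingredients (the $\min$-hypothesis on $f$ and the $L^1$-average bound), and your explicit requirement that $r_{p(n,k)}\ge 1-\frac1k$ cleanly ensures the radii tend to $1$.
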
 

\begin{proof}
We adapt the plan used in Theorem 3.1.

Let $\{P_n:n\in \mathbb N\}$ be a dense set of polynomials in $E.$
By hypothesis for each $r<1$ there exists $C(r)$  such that
\begin{equation}\label{L^1 average}
\int_0^{2\pi} |h(re^{i\theta}|~dm(\theta )\leq C(r) ||h||\end{equation}
for all $h \in E.$

For each $(n,k)$ choose $M=M(n,k)$ and $p(n,k)$
 such that
\begin{equation} \label{M is big}
 ||P_n||_{\infty}+k \leq M 
\end{equation}  

\begin{equation} \label{f(z) with |z|=r_p is big}
2kM^2<\frac{1}{\phi(r_{p(n,k)})}\min\{|f(r_{p(n,k)}e^{i\theta})|:e^{i\theta} \in \mathbb T\}.
\end{equation}
 
Let
$B(n,k)$ be the ball centered at $P_n +\frac{1}{k} f$ and radius 

\begin{equation}\label{radius of B_n is small}
\epsilon(n,k)=min\Bigl\{\frac{1}{k},\frac{1}{C(r_{p(n,k)})}\Bigr\}.
\end{equation} 

For $g\in B(n,k)$ let 
\[H=H(n,k)=\{e^{i\theta}:\frac{1}{\phi(r_{p(n,k)})}|g(r_{p(n,k)}e^{i\theta})|<M\}.\] 

Now we estimate $|H|,$ the Lebesgue measure of $H.$ Let $r=r_{p(n,k)}.$ Then the first inequality is due to
\eqref{f(z) with |z|=r_p is big}, and the second is the triangle inequality.  In the following we may assume that $\phi(r)>2$.

\[2M^2|H|\leq\int_H \frac{1}{k\phi(r)}|f(re^{i\theta})|dm(\theta)\leq\]

\[\frac{1}{\phi(r)}\Big (\int_H |g(re^{i\theta})|+ |P_n(re^{i\theta})+\frac{1}{k}f(re^{i\theta})-g(re^{i\theta})|+|P_n(re^{i\theta})|dm(\theta)\Big ).\]
We now get an upper bound for each of the last three summands:

The definition of $H$ implies that\[\frac{1}{\phi(r)}\int_H |g(re^{i\theta}dm(\theta)|\leq M|H|\leq M.\]

the fact that $g\in B_n,$ the radius \eqref{radius of B_n is small} and the $L^1$ average continuity 
\eqref{L^1 average} and also w.l.g we may assume that $r$ is sufficiently near to 1 such that $\phi(r)>2.$
\[\frac{1}{\phi(r)}\int_H  |P_n(re^{i\theta})+\frac{1}{k}f(re^{i\theta})-g(re^{i\theta})||dm(\theta)\leq\]
\[\frac{1}{\phi(r)}\int_{\mathbb T}  |P_n(re^{i\theta})+\frac{1}{k}f(re^{i\theta})-g(re^{i\theta})|dm(\theta)\leq \frac{1}{\phi(r)}\leq \frac{1}{2}M.\]
For the last summand we use \eqref{M is big}
\[\frac{1}{\phi(r)}\int_H |P_n(re^{i\theta})|dm(\theta)\leq \frac{1}{\phi(r)}\int_{\mathbb T} |P_n(re^{i\theta})|dm(\theta)\leq \frac{1}{2}M.\]

Consequently
\[2M^2|H|\leq 2M \Longrightarrow |H|\leq \frac{1}{M}  ~~~\text{ and }1-\frac{1}{M} \leq |\mathbb T\setminus H|.\]

As in the previous theorem the open sets
\[W_k=\cup_{1\leq n,k\leq j}B(n,j)\]
  are dense since each polynomial $P_n,$ is a limit point of $W_k.$ Using Baire's category theorem again we have that
 \[W_{\infty}=\cap_{1\leq k}W_k\]
is a residual set in E.

Let $g\in W_{\infty}.$ Arguing like in Claim 3.7
we obtain 
\[\limsup_{r \to 1}\frac{1}{\phi(r)}|g(re^{i\theta})|=\infty\]
for almost all $e^{i\theta} \in \mathbb T.$
\end{proof}

\begin{cor}
The space $H^p$ is a first category subset of $\mathcal D^p_{p-1}$ for $2<p.$  
 \end{cor}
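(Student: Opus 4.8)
The plan is to verify that $\mathcal D^p_{p-1}$ (for $2<p$) satisfies all the hypotheses of Theorem 3.10, applied with $\phi(r)\equiv 1$, and then deduce the first-category statement exactly as in Corollary 3.9(ii). First I would recall that the polynomials are dense in $\mathcal D^p_{p-1}$ by Abkar \cite{Ab}, and that the point evaluations are $L^1$-average continuous by the Proposition proved above. So the only remaining input is the existence of a single function $f\in \mathcal D^p_{p-1}$ with the required unbounded boundary behavior. Here I would invoke the result of Girela and Pela\'ez \cite{Gi-Pe}: for $2<p$ there is $f\in\mathcal D^p_{p-1}$ with $\lim_{r\uparrow 1}|f(re^{i\theta})|=\infty$ a.e. on $\mathbb T$.

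The subtlety is that Theorem 3.10 as stated requires not merely an $f$ whose modulus blows up a.e., but one for which there is a sequence $r_p\to 1$ with $\min\{|f(r_pe^{i\theta})|:e^{i\theta}\in\mathbb T\}/\phi(r_p)\to\infty$ — a uniform-in-$\theta$ lower bound along a sequence of circles. With $\phi\equiv 1$ this asks for $\min_{|z|=r_p}|f(z)|\to\infty$. The Girela–Pela\'ez function gives an a.e. statement, not a uniform one, so there is a gap to bridge. I see two routes. The cleaner one is to observe that the proof of Theorem 3.1 (the first flavor, which only needs continuity of point evaluations and an $f$ with $\limsup_{r\to1}|f(re^{i\theta})|=\infty$ a.e.) applies directly: since $2<p$ implies point evaluations on $\mathcal D^p_{p-1}$ are continuous (compose continuity of evaluation of $f'$ on $\mathcal A^p_{p-1}$ with the integral representation, or note $L^1$-average continuity plus the mean value property over circles forces pointwise continuity on compacta), and the Girela–Pela\'ez function certainly has $\limsup|f(re^{i\theta})|=\infty$ a.e. (indeed the full limit is $\infty$ a.e.), Theorem 3.1 yields that the bad set is residual in $\mathcal D^p_{p-1}$. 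The alternative route, if one insists on Theorem 3.10, would be to strengthen the Girela–Pela\'ez construction — which is typically a lacunary or Blaschke-type series — to get the minimum modulus on circles $r_p$ to grow, mimicking Proposition 2.7; but this is more work and unnecessary.

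Concretely I would write: since the polynomials are dense in $\mathcal D^p_{p-1}$ and the point evaluations are continuous there, and since by \cite{Gi-Pe} there is $f\in\mathcal D^p_{p-1}$ with $\lim_{r\uparrow1}|f(re^{i\theta})|=\infty$, hence $\limsup_{r\to1}|f(re^{i\theta})|=\infty$, for a.e.\ $e^{i\theta}$, Theorem 3.1 applies and shows
\[
\{g\in\mathcal D^p_{p-1}:|\{e^{i\theta}:\limsup_{r\to1}|g(re^{i\theta})|=\infty\}|=1\}
\]
is residual in $\mathcal D^p_{p-1}$. Then, as in Corollary 3.9(ii), the Littlewood–Paley inclusion $H^p\subset\mathcal D^p_{p-1}$ of \cite{Li-Pa} together with Fatou's theorem shows every $g\in H^p$ has finite radial limits a.e., so $H^p$ is disjoint from this residual set and is therefore of first category in $\mathcal D^p_{p-1}$.

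The main obstacle I anticipate is precisely the mismatch between the a.e.\ blow-up furnished by \cite{Gi-Pe} and the uniform minimum-modulus hypothesis of Theorem 3.10; the resolution is to route the argument through Theorem 3.1 instead (whose hypothesis is met verbatim), which also makes the continuity-of-point-evaluations requirement the only thing to check beyond the two cited facts — and that follows from the $L^1$-average estimate of the Proposition combined with subharmonicity of $|g|$ (or directly from boundedness of $f\mapsto f'$ into $\mathcal A^p_{p-1}$ and the elementary estimate $\mathcal A^p_{p-1}\hookrightarrow$ evaluation). I would keep the written proof to three or four lines, citing \cite{Ab}, \cite{Li-Pa}, \cite{Gi-Pe}, Theorem 3.1, and Corollary 3.9.
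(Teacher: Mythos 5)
Your argument is correct, but it takes a genuinely different route from the paper's. The paper proves this corollary via Theorem 3.10 (the $L^1$-average flavor): it combines the density of polynomials \cite{Ab}, the $L^1$-average continuity of point evaluations (Proposition 2.3), and, crucially, Theorem 3.5 of \cite{Gi-Pe} rather than the a.e.\ blow-up result quoted in the introduction --- that theorem supplies a function in $\mathcal D^p_{p-1}$ whose \emph{minimum} modulus on a sequence of circles $|z|=r_p$ grows fast enough, which is exactly the uniform hypothesis of Theorem 3.10. So the mismatch you correctly identified is resolved in the paper by citing a stronger input from \cite{Gi-Pe}, not by changing theorems. Your alternative --- routing through Theorem 3.1 with the a.e.\ blow-up function --- works, but it hinges on the continuity of point evaluations on $\mathcal D^p_{p-1}$, which the paper explicitly leaves open (Question 4.1); the whole point of introducing $L^1$-average continuity and Theorem 3.10 was to sidestep that issue. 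Your second justification of continuity is sound: the standard pointwise estimate $|g(z)|\leq C\,(1-|z|^2)^{-(p+1)/p}\|g\|_{\mathcal A^p_{p-1}}$ applied to $g=f'$ and integrated along the radius bounds $|f(z)|$ by a constant times $\|f\|_{\mathcal D^p_{p-1}}$ uniformly on compacta, since $\int_0^r(1-t)^{-(p+1)/p}\,dt$ is finite for each $r<1$; so your route in fact answers Question 4.1 affirmatively. Your first justification, ``$L^1$-average continuity plus the mean value property over circles,'' needs a small repair: the circles in the sub-mean-value inequality are centered at the evaluation point, not at the origin, so you must pass to the area sub-mean-value inequality over a small disc about $z_0$ and use the local boundedness of $C(r)$ in $r$. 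In short, the paper's route trades a stronger hypothesis on the exhibited function for a weaker continuity assumption, yours trades the reverse, and the final step (Fatou's theorem plus the Littlewood--Paley inclusion $H^p\subset\mathcal D^p_{p-1}$ of \cite{Li-Pa}) is identical in both.
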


\begin{proof} By Fatou's theorem a function in $H^p$
converges a.e on $\mathbb T.$ On the other hand, the density of the polynomials \cite{Ab}, Proposition 2.3,  and Theorem 3.5 in \cite{Gi-Pe} allow us to use the preceding theorem. 
\end{proof}

\section{ Concluding remarks}

\begin{quest}
    
Are the point evaluation continuous on $\mathcal D_{p-1}^p$ for $2<p?$ Can one modify the proof of Theorem 3.1 in 
\cite{Gi-Pe} to have continuity?
\end{quest}

\begin{quest}
     Does there exist a Banach space of analytic functions on $\mathbb D$ in which the point evaluations are not continuous but they are $L^1$-average continuous?
\end{quest}

\begin{quest}
     How does $S_{\nu}$ fit into $\hat S_{\nu}=\cap_{\mu<\nu}S_{\mu}$ in general? 
     When $\nu=0$ the answer is given by Corollary 3.9.
There we have a powerful tool in terms of convergence or not on the boundary.
\end{quest}
\begin{rem}
    
 It might be of interest if in Theorems 3.1 and 3.10 we could obtain limit (instead of limit superior ) in the  conclusions if there are  
limits in the assumptions.
\end{rem}

\begin{rem} How does $\cup_{\mu>\nu}S_{\mu}$ fit in $S_{\nu}?$ By Prop 5.3 \cite{Bu-Sa} each $S_{\mu}$ is compactly embedded in $S_{\nu}$ when {$\mu>\nu,$}
and is first category since $S_{\nu}$ is infinite dimensional. Since $\cup_{\mu>\nu}S_{\mu}=\cup_{\mu_n>\nu}S_{\mu}$
for any sequence $\lim_{n\to\infty}\mu_n=\nu,$
it follows that the union is a first category subset of $S_{\nu}.$
\end{rem}

\begin{rem} The density of the polynomials in both theorems could be replaced by the density of the disc algebra $\mathcal A.$
\end{rem}

\end{document}